\documentclass[graybox]{svmult}







\pagestyle{headings}

\usepackage{color}

\usepackage{mathptmx}

\usepackage{helvet}
\usepackage{makeidx}
\usepackage{multicol}
\usepackage{footmisc}
\usepackage{amssymb}






\def\QED{\hfill\vrule height 1.5ex width 1.4ex depth -.1ex \vskip20pt}

\begin{document}
\title*{On inversions and Doob $h$-transforms of linear diffusions}
\author{L. Alili,   P. Graczyk and  T. \.Zak}
\institute{Name of First Author \at Larbi Alili, Department of
Statistics, The University of Warwick, CV4 7AL, Coventry, UK.
\email{L.alili@Warwick.ac.uk}
\and  Name of Second Author \at Piotr Graczyk, D\'epartement de Math\'ematiques, Universit\'e d'Angers,
UFR Sciences, 2 boulevard Lavoisier, 49045 Angers cedex 01.
\email{graczyk@univ-angers.fr}
\and Name of Third Author
\at Tomasz \.Zak,
Institute of Mathematics and Computer Science, Wroc{\l}aw University of
Technology, Wybrze\.ze Wyspia\'nskiego 27, 50-370 Wroc{\l}aw, Poland.
\email{Tomasz.Zak@pwr.edu.pl}}
%
%

\maketitle

\abstract{ Let $X$ be a regular linear diffusion whose state space is an open
interval $E\subseteq \mathbb{R}$. We consider the  dual diffusion $X^*$
  whose probability law is obtained as a Doob $h$-transform of the law of $X$,  where $h$ is a positive harmonic function for the infinitesimal generator of $X$ on $E$.
    We provide a construction of $X^*$ as a deterministic inversion $I(X)$ of $X$, time
changed with some random clock. Such  inversions  generalize the Euclidean inversions that intervene when $X$ is a Brownian motion. The important case  where  $X^*$ is  $X$ conditioned to stay above some fixed level is included.
The families of deterministic inversions are given explicitly for the Brownian motion with drift, Bessel processes
and the 3-dimensional hyperbolic Bessel process.}





\section{Motivations and main results}
One of the main incentives for carrying out this work was the paper [27] where Marc Yor studied
involutions which are different from the classical inversion with respect to the unit sphere on $R^d$ , $d \geq 1$. We had with him interesting discussions and received from him many encouragements to finalize this work, particularly during the conference {\it Analyse harmonique et Probabilit\'es, Angers 2012}. We would like to thank him for that and for the dear time and invaluable advices he gave to the first and second authors.

Since then he passed away. May his soul rest in peace!

The main objective of this paper is to study  analytical aspects of the stochastic Doob duality.
 We elucidate a striking equivalence between stochastic Doob duality  of one-dimensional diffusions $X$ and $X^*$ and a simple analytical transformation
$I(X)$ of trajectories of a diffusion via a deterministic inversion $I$.

The construction was known for the case
where $X$ is a $3$-dimensional Bessel process started at a positive $X_0$,   the dual process $X^*$ is a Brownian motion killed when it hits $0$ and
  the inversed process  is $1/X$, which  is a Brownian motion conditioned via a Doob $h$-transform to stay positive. The processes  $X^*$
  and $1/X$ coincide up to a time change, see \cite{ry99}.

  It was also known \cite{ry99} that the three-dimensional hyperbolic Bessel process can be realized via a Doob transform as a Brownian motion with negative unit drift conditioned to stay positive.   This work  was inspired by the search and  discovery of  an inversion
  $$I(X_t)=\frac{1}{2} \ln \coth X_t$$
  of the $3$-dimensional hyperbolic Bessel process $(X_t)$.  When $I(X_t)$  is appropriately time changed, we obtain a Brownian motion with negative unit drift,
  see Section \ref{HypBes}.

 The main result of this paper says  that an analytical inversion $I$ can be constructed for any pair of dual linear diffusions  $X$ and $X^*$, see Theorem \ref{equality-in-distribution}. \\
  A direct application of this result is a better understanding of the conditioned diffusions $X^*$: they are obtained, up to a time change, as an analytical transformation $I(X)$ of the original diffusion $X$. Both the families of inversions and the random clocks involved in the construction  have interesting features and deserve their own right of mathematical interest.

Our original motivations for the search of deterministic inversions of stochastic processes come from potential theory, where
 a crucial role is played by the Kelvin transformation, related to the  inversion
 with respect to the unit sphere $I(x)=x/\|x\|^2$, see e.g. \cite{BogdanLN} and  \cite{Bogdan-Tom}.  One of the other reasons why we worked on this topic
 is a strong need of such analytical tools to develop the potential theory of various important processes, e.g.   hyperbolic Brownian motions and hyperbolic Bessel  processes.

Taking into account the results of  \cite{Bogdan-Tom} for stable processes, it is natural to ask whether  such analytical constructions of  conditioned processes should also be available for one-dimensional self-similar processes. In a work in progress, this question and some related topics  are studied in collaboration with L. Chaumont.

 The paper is organized as follows.
  In Section 2 we introduce basic notions and notations on diffusions $X$ with a state space $E$ and  we explain precisely the objectives of the paper.
 We start Section 3  with the construction of a family of inversions associated with a diffusion $X$.  The construction involves a reference scale function $s$ and not the speed measure of $X$. We note that the inversion of the state space $E $ in the direction of $s$ is uniquely characterized  by the fixed point $x_0$. Also, among the set of inversions in the direction of $s$, the $s$-inversion  with fixed point $x_0$ is  uniquely characterized by the associated   positive harmonic function $h$. Thus, the family of inversions we obtain is a one parameter family of involutions indexed by the fixed point $x_0$.   In Section 4, we state  and prove our main result. That gives the path construction of $X^*$ in terms of the inverse of $X$ in the direction of $s$ with respect to a point $x_0\in E$. Section 5 is devoted to
applications. We point out in Corollaries \ref{hypBessel} and \ref{Bessel} some new results that we obtain for Bessel processes and the hyperbolic Bessel process of dimension $3$.

\section{Preliminaries on dual processes and inversions} Let $X:=(X_t, t\leq \zeta)$ be a regular diffusion with life time $\zeta$ and state space  $E=(l,r)\subseteq \mathbb{R}$ which is defined on complete probability space $(\Omega, (\mathcal{F}_t)_{t\geq 0}, \mathbb{P})$. Unless otherwise specified, we assume that $X$ is killed, i.e. sent to a cemetery point $\Delta$, as soon as it hits one of the boundaries; that is $\zeta=\inf\{s,\  X_s=l \ \hbox{or}\ X_s=r \}$
 with the usual convention
$\inf\{\emptyset\}=+\infty$.

Our objectives in this paper are summarized as follows. Given a positive function $h$ which is harmonic for the infinitesimal generator $L$ of $X$, i.e. $Lh=0$, we
give an explicit construction of the dual $X^*$ of $X$ with respect to $h(x)m(dx)$ where  $m(dx)$ is the speed measure of $X$.  The distribution of $X^*$  is obtained by a Doob $h$-transform change of measure of the distribution of $X$. We shall see that $X^*$ is either the process itself, i.e. the process is self-dual, or the original diffusion conditioned to have opposite behaviors at the boundaries when started from a specific point $x_0$ in the state space; this is explained in details in Proposition \ref{Proposition4} below. We refer to the original paper \cite{Doob-1957} by J.L. Doob for $h$-transforms and to \cite{Salm-Boro} where this topic is surveyed. The procedure consists in first constructing the inverse of the diffusion with respect to a point $x_0\in E$ which is a deterministic involution of the original diffusion. Time changing then with an appropriate clock gives a realization of the dual process.
  In order to say more, let us fix the mathematical setting.   Suppose that $X$ satisfies the s.d.e.
 \begin{equation}\label{sde-x}
X_t=X_0+\int_0^t \sigma(X_s)dW_s+\int_0^t b(X_s)ds,\quad t<\zeta ,
\end{equation}
where $X_0\in E$, $(W_t, t\leq \zeta)$ is a standard Brownian motion and  $\sigma,\, b:E\rightarrow \mathbb{R}$ are measurable real valued functions. Assume that $\sigma$ and $b$ satisfy the Engelbert-Schmidt conditions
\begin{equation}\label{condition-1}
\sigma\neq 0 \ \hbox{and}\  1/\sigma^2,\, b/\sigma^2\in L^1_{loc}(E),
 \end{equation}
 where $L^1_{loc}(E)$ is the space of locally integrable functions on $E$. Condition (\ref{condition-1}) implies that (\ref{sde-x}) has a unique solution in law, see Proposition 5.15 in \cite{Karatzas-Shreve}. We write $(\mathcal{F}_t^X, t\geq 0)$ for the natural filtration generated by $X$ and denote by  $\mathcal{DF}(E)$ the set of diffusions  satisfying the aforementioned conditions.  For background on diffusion processes, we refer to
(\cite{Assing-Schmidt}, \cite{Salm-Boro}, \cite{Ito-Mc}, \cite{Karatzas-Shreve}, \cite{Pinsky-95}--\cite{RW87}).

Let $X\in \mathcal{DF}(E)$. For  $y\in E$, let $H_y=\inf\{
t>0;\: X_t=y\}$ be the first hitting time of $y$ by $X$. Recall that the scale function of $X$ is any continuous strictly increasing function on $E$ satisfying
\begin{equation}\label{hitting}
\mathbb{P}_{x}(H_{\alpha}<H_{\beta})=(s(x)-s(\beta))/(s(\alpha)-s(\beta))
\end{equation}
for all $l<\alpha<x<\beta<r$. This is a reference function which is strictly increasing and given
modulo an affine transformation by $s(x)=\int_c^x \exp {(-2\int_c^z b(r)/\sigma^2(r)dr)}dz $ for some $c\in E$.
 For convenience, we distinguish, as in Proposition 5.22 in \cite{Karatzas-Shreve}, the following four different subclasses of diffusions which exhibit different forms of inversions, i.e. mappings $I:E\rightarrow E$ such that $I\circ I(x)= x$, for all  $x\in E$, and $I(E)=E$ (for a more  precise definition of an $s$-inversion, see Definition \ref{DefInv}). We say that $X\in \mathcal{DF}(E)$ is of
 \begin{itemize}
\item Type 1  if $-\infty<s(l)$ and $s(r)<+\infty$;
\item Type 2  if $-\infty<s(l)$ and $s(r)=+\infty$;
\item Type 3  if $s(l)=-\infty$ and $\ s(r)<+\infty$;
\item Type 4  if $s(l)=-\infty$ and $s(r)=+\infty$.
\end{itemize}
Type 4 corresponds to recurrent diffusions while Types 1--3 correspond to transient ones.
 Recall that the infinitesimal generator of $X$ is given by $Lf=(\sigma^2/2)f''+bf'$ where $f$ is in the domain  $\mathcal{D}(X)$ which is appropriately defined for example in \cite{Ito-Mc}.
 For $x_0\in E$ let $h$ be the unique positive harmonic function for $L$ satisfying $h(x_0)=1$ and either
\begin{eqnarray}\nonumber
h(l) =
 \left\{
\begin{array}{lll}
   1/h(r) &\hbox{if}\ X\ \hbox{is of type 1 with}\ 2s(x_0)\neq s(l)+s(r); \\
   1 &\hbox{if}\ X\ \hbox{is of type 1 and}\ 2s(x_0)=s(l)+s(r)\ \hbox{or}\ \hbox{type 4};\\
  0 &\hbox{if}\ X\ \hbox{is of type 2};  \\
\end{array}
\right.
\end{eqnarray}
or
\begin{eqnarray}
h(r) =
\begin{array}{lll}
   0 &\hbox{if}\  X\ \hbox{is of type 3.} \nonumber\\
\end{array}
\end{eqnarray}
If $X$ is of type 4  or of type 1 with $2s(x_0)=s(l)+s(r)$ then  $h\equiv 1$ otherwise $h$ is specifically given by (\ref{harmonic-h-general}) which is displayed in Section 3 below.

Let $X^*$ be the dual of $X$, with respect to $h(x)m(dx)$, in the following classical sense. For all $t>0$ and all Borel functions $f$ and $g$, we have
$$\int_{E}f(x)P_tg(x) h(x)m(dx)=\int_{E}g(x)P_t^*f(x) h(x)m(dx) $$
where $P_t$ and $P^*_t$ are the semigroup operators of $X$ and $X^*$, respectively.
The probability law of $X^*$ is related to that of $X$ by a Doob $h$-transform, see for example \cite{GJ-Y}. To be more precise,  assuming that  $X^*_0= x\in E$  then the distribution $\mathbb{P}_{x}^{*}$ of $X^*$ is obtained  from the distribution $\mathbb{P}_{x}$ of $X$ by the change of measure
\begin{equation}\label{Doob-h-transform}
d\mathbb{P}_{x}^{*}|_{\mathcal{F}_{t}}=\frac{h(X_{t})}{h(x)}
d\mathbb{P}_{x}|_{\mathcal{F}_{t}}, \quad t< \zeta.
\end{equation}
We shall denote by $\mathbb{E}_{x}^{*}$ the expectation under the probability measure $\mathbb{P}_{x}^{*}$; $X^*$ has  the  infinitesimal generator
$L^{*}f=L(hf)/h$ for $f\in \mathcal{D}(X^*)=\{g:E\rightarrow E,\ hg \in \mathcal{D}(X) \}$. The two processes $(h(X_{t}), t\leq \zeta)$ and $(1/h(X^*_t), t\leq  \zeta^*)$ are continuous local martingales. We shall show that the former process can be realized as
 the latter when time changed with an appropriate random clock. Thus, the expression of either  the process $X^*$ or $X$, which are both $E$-valued, in terms of the other, involves the function $I(x)=h^{-1}(1/h(x))$  which is clearly an involution where it is well defined. We will show in Proposition \ref{Prop-Involutions-0}
that $I : E\rightarrow E$.

 For general properties of real valued involutions, we refer to  \cite{Wiener-al-1988} and  \cite{Wiener-al-2002}. Now, indeed, an intuitive formulation of our main result is that, when $h$ is not constant, the processes $(1/h(X^*_t), t\leq \zeta^*)$ and $(h(X_{\tau_t}), t\leq A_{\zeta})$, if both are started at $x_0$,   have the same law,  where, for $t>0$,
 \begin{equation}\label{defA}
 A_t:=\int_0^t I'^2(X_s)\sigma^2(X_s)/ \sigma^2\circ I(X_s)\ ds,
 \end{equation}
 $\tau_t$ is the inverse of $A_t$, and $I'$ stands for the derivative of $I$. Two interesting features of the involved clocks are described as follows. First,  $\tau_t=A^*_t$ where $A^*_t$ is defined as above with $X$ replaced by $X^*$. Second, $\zeta^*$ (resp. $\zeta$) and $A_{\zeta}$ (resp. $A^*_{\zeta^*}$) have the same distribution; these new identities in distribution for killed diffusions resemble the Ciesielski-Taylor  and Biane identities, see \cite{Biane} and \cite{Ciesielski-Taylor-1962}.
 We call the process $(I(X_t), t<\zeta)$ the inverse of $X$ with respect to $x_0$. We know that Doob $h$-transforming $X$ amounts to conditioning it to behave in a particular way at the boundaries. Our construction sheds light on the exact behaviour of the Doob $h$-transformed process at the boundaries.

In the transient case, the general construction of $X^*$ from $X$ discovered by M. Nagasawa,   in \cite{Nagasawa},  applies to linear diffusions; see also \cite{Meyer-68} and  \cite{ry99}. We mention that this powerful method is used by  M.J. Sharpe in \cite{Sharpe}  and by D. Williams in \cite{Williams} for the study of path transformations of some diffusions. While the latter path transform involves time reversal from cooptional times, such as last passage time,  the construction we present here involves only deterministic inversions and time changes with random clocks of the form (\ref{defA}). Although we only consider one dimensional diffusions in this paper, inversions of stable processes and Brownian motion in higher dimensions are studied in \cite{Bogdan-Tom} and  \cite{Yor-85}, respectively.

\section{Conditioned diffusions and inversions}
  Let $X\in \mathcal{DF}(E)$. To start with, assume that $X$ is of type 1 and let $h:E\rightarrow \mathbb{R}_+$ be a positive harmonic function for the infinitesimal generator $L$ of $X$  satisfying $0<h(l)<h(r)<\infty$. Let  $X^*$ be the dual of $X$ with respect to $h(x)m(dx)$.

  We are ready to state the following result which motivates the construction of inversions; to our best knowledge, the described role of
	 the $h$-geometric mean $x_0$ and the $h$-arithmetic mean  $x_1$, which are defined below, for $X$ and $X^*$, has not been known.

\begin{proposition}\label{Proposition1}   Suppose that $X$ is of type 1. The following assertions hold true.

 \begin{itemize}
 \item[1)]  There exists a unique $x_0\in E$ such that $h^2(x_0)=h(l)h(r)$. We call $x_0$ the $h$-geometric mean of $\{ l, r \}$. Furthermore, for $x\in E$, we have
$
\mathbb{P}_{x}(H_{l}<H_{r})= \mathbb{P}_{x}^*(H_{r}<H_{l})$ if and only if $x=x_0$.

\item[2)] There exists a unique $x_1\in E $ such that $2h(x_1)=h(l)+h(r)$. We call $x_1$ the $h$-arithmetic mean of $\{ l, r \}$. Furthermore, for $x\in E$, we have
$
\mathbb{P}_{x}(H_{l}<H_{r})= \mathbb{P}_{x}(H_{r}<H_{l})=1/2$
if and only if $x=x_1$.
\end{itemize}
\end{proposition}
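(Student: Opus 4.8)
\medskip
\noindent\textbf{Proof proposal.} The plan is to reduce both assertions to an elementary computation with the scale function, once two preliminary facts are in place: the explicit shape of $h$, and the exit probabilities of $X$ and of $X^{*}$. Since $X$ is of type $1$, every function harmonic for $L$ solves $(\sigma^{2}/2)h''+bh'=0$ and is therefore affine in a reference scale function; write $h=as+b$ with $a\ne 0$ (because $h$ is non-constant). As $s(l)$ and $s(r)$ are finite, $h$ extends continuously and strictly monotonically to $[l,r]$, and in the situation considered here $h(l),h(r)\in(0,\infty)$ with $h(l)\ne h(r)$, so $h$ is a strictly monotone bijection of $(l,r)$ onto the open interval with endpoints $h(l)$ and $h(r)$. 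The geometric mean $\sqrt{h(l)h(r)}$ and the arithmetic mean $\tfrac12(h(l)+h(r))$ both lie strictly between $h(l)$ and $h(r)$ (strict AM--GM, using $h(l)\ne h(r)$), hence each has exactly one $h$-preimage in $E$; these are the desired points $x_{0}$ and $x_{1}$, and the existence/uniqueness part of the statement is complete.

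Next I would record the exit probabilities. Letting $\alpha\downarrow l$ and $\beta\uparrow r$ in (\ref{hitting}), the events $\{H_{\alpha}<H_{\beta}\}$ vary monotonically to $\{H_{l}<H_{r}\}$, so continuity of measure applies, and since $s$ is continuous up to the boundary one gets $\mathbb{P}_{x}(H_{l}<H_{r})=(s(r)-s(x))/(s(r)-s(l))$ and $\mathbb{P}_{x}(H_{r}<H_{l})=(s(x)-s(l))/(s(r)-s(l))$. For $X^{*}$ I would use the Doob transform (\ref{Doob-h-transform}): for $l<\alpha<x<\beta<r$ and $T:=H_{\alpha}\wedge H_{\beta}$, the martingale $(h(X_{t\wedge T})/h(x))_{t\ge 0}$ is bounded (as $h$ is bounded on $[\alpha,\beta]$), whence $d\mathbb{P}_{x}^{*}|_{\mathcal{F}_{T}}=(h(X_{T})/h(x))\,d\mathbb{P}_{x}|_{\mathcal{F}_{T}}$ and thus $\mathbb{P}_{x}^{*}(H_{\alpha}<H_{\beta})=(h(\alpha)/h(x))\,\mathbb{P}_{x}(H_{\alpha}<H_{\beta})$; the same limiting procedure yields $\mathbb{P}_{x}^{*}(H_{l}<H_{r})=\frac{h(l)}{h(x)}\cdot\frac{s(r)-s(x)}{s(r)-s(l)}$ and $\mathbb{P}_{x}^{*}(H_{r}<H_{l})=\frac{h(r)}{h(x)}\cdot\frac{s(x)-s(l)}{s(r)-s(l)}$. (Equivalently, one may note that $1/h$, being $L^{*}$-harmonic and strictly monotone, is a scale function for $X^{*}$, and read these off.)

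With these in hand the characterizations are pure algebra. For 1), $\mathbb{P}_{x}(H_{l}<H_{r})=\mathbb{P}_{x}^{*}(H_{r}<H_{l})$ is equivalent to $h(x)(s(r)-s(x))=h(r)(s(x)-s(l))$; substituting $h=as+b$ and expanding, the difference of the two sides is a nonzero multiple of $h(l)h(r)-h^{2}(x)$, so the identity holds exactly when $h^{2}(x)=h(l)h(r)$, i.e.\ by injectivity and positivity of $h$ exactly when $x=x_{0}$. For 2), $\mathbb{P}_{x}(H_{l}<H_{r})=1/2$ is equivalent to $s(x)=\tfrac12(s(l)+s(r))$; since $h$ is affine in $s$, this is the same as $2h(x)=h(l)+h(r)$, and as $\mathbb{P}_{x}(H_{r}<H_{l})=1-\mathbb{P}_{x}(H_{l}<H_{r})$ the two probabilities equal $1/2$ simultaneously, exactly when $x=x_{1}$.

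I expect no serious obstacle beyond two bookkeeping points. First, the passages to the limit $\alpha\downarrow l,\ \beta\uparrow r$ in the exit formulas: these rely on the monotone convergence of the events $\{H_{\alpha}<H_{\beta}\}$ together with the finiteness of $s(l),s(r)$, which guarantees these events carry full mass and the ratios converge; transferring (\ref{Doob-h-transform}) from fixed times to $T$ is likewise routine, via optional stopping of the bounded martingale $h(X_{\cdot\wedge T})$. Second, the existence of $x_{0}\in E$ itself rests on $h$ not vanishing at the boundary, which is exactly what the type-$1$ normalization $h(l)=1/h(r)$ secures (the alternative $h\equiv1$ being excluded here since $h$ is non-constant); should $h(l)=0$ or $h(r)=0$, neither equality has a solution in $E$ and the statement is read accordingly. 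The real content is the clean algebraic coincidence that the balance point $\mathbb{P}_{x}(H_{l}<H_{r})=\mathbb{P}_{x}^{*}(H_{r}<H_{l})$ is governed precisely by the $h$-geometric mean of $\{l,r\}$.
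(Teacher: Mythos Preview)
Your argument is correct; the route differs from the paper's only in how you compute the exit probabilities for $X^{*}$. The paper bypasses the optional-stopping/limit step entirely by observing that $h$ itself (being an affine function of $s$) is a scale function for $X$, and that $-1/h$ is a scale function for $X^{*}$; it then writes both sides of the identity directly from (\ref{hitting}) in terms of $h$, namely
\[
\mathbb{P}_{x}(H_{l}<H_{r})=\frac{h(x)-h(r)}{h(l)-h(r)},\qquad
\mathbb{P}_{x}^{*}(H_{r}<H_{l})=\frac{1/h(x)-1/h(l)}{1/h(r)-1/h(l)},
\]
and equating these is immediately equivalent to $h^{2}(x)=h(l)h(r)$. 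This saves the passage $\alpha\downarrow l,\ \beta\uparrow r$, the transfer of (\ref{Doob-h-transform}) to the stopping time $T$, and the substitution $h=as+b$ followed by expansion. Your approach, on the other hand, makes the mechanism behind the $X^{*}$ formula explicit (Doob transform plus optional stopping of the bounded martingale $h(X_{\cdot\wedge T})/h(x)$), which is a useful check that $-1/h$ really is a scale function for $X^{*}$ rather than a fact to be quoted. The existence/uniqueness arguments for $x_{0}$ and $x_{1}$ are essentially identical in both treatments.
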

\begin{proof} 1) Since $h$ is continuous and monotone, because it is an affine function of $s$,  if $h$ is increasing (resp. decreasing) then the inequality $h(l)<\sqrt{h(l)h(r)}<h(r)$ (resp. $h(r)<\sqrt{h(l)h(r)}<h(l)$) implies the existence and uniqueness of $x_0$. Because $-1/h$ is a scale function for $X^*$, see for example \cite{Salm-Boro}, applying (\ref{hitting})  yields
\begin{eqnarray}
\mathbb{P}_{x}(H_{l}<H_{r})=\left(h(x)-h(r) \right)/\left(h(l)-h(r) \right) \nonumber
\end{eqnarray}
and
\begin{eqnarray}
\mathbb{P}_{x}^*(H_{l}>H_{r})= \left(1/h(x)-1/h(l) \right)/\left(1/h(r)-1/h(l)\right). \nonumber
\end{eqnarray}
These are equal if and only if $x=x_0$. \\

2) The proof is omitted since it is very similar.
\QED
\end{proof}

As $h$ is monotone, we find $r=h^{-1}\left(h^2(x_0)/h(l) \right)$. This expression of $r$ in terms of $x_0$ and $l$ allows us
to introduce the mappings we are interested in. The last formula exhibits the function $I:x\rightarrow h^{-1}\left(h^2(x_0)/h(x) \right)$ which is well defined on $E$ by monotonicity of $h$. Clearly, $I$ is a decreasing involution of $E$ with fixed point $x_0$.  Next, observe that $h\circ I \circ h^{-1}:x\rightarrow h^2(x_0)/x$ is the Euclidian inversion with fixed point $h(x_0)$.\\

  We return now to the general case and assume that $X$ is of one of the types 1--4. Our aim is to determine the set of all involutions associated to $X$ which lead to the set of M\"obius real involutions
$$\mathcal{MI}:=\left\{\omega:\mathbb{R}\backslash \{a/c\}\rightarrow \mathbb{R}\backslash \{a/c\}; \omega(x)=\frac{ax+b}{cx-a}, a^2+bc> 0, \ a, b, c\in \mathbb{R} \right\}. $$
Note that the condition $a^2+bc>0$ for $\omega\in \mathcal{MI}$ ensures that $\omega$, when restricted to either of the intervals $(-\infty, a/c)$ and $(a/c, +\infty)$, is a decreasing involution. Let us settle the following definition.
\begin{definition}\label{DefInv}
 Let $s$ and $s^{-1}$ be, respectively, a reference scale function for $X$ and its inverse function, and $x_0\in E$. A mapping $I:E\rightarrow E$ is called the inversion in the direction of $s$, or $s$-inversion, with fixed point $x_0$ if the following hold:
\begin{itemize}
\item[1)] $I\circ I(x)=x$ for $x\in E$;
\item[2)] $s\circ I \circ s^{-1}\in \mathcal{MI}$;
\item[3)] $I(E)=E$;
\item[4)] $I(x_0)=x_0$.
\end{itemize}
If $s\circ I \circ s^{-1}$ is the Euclidian reflection in $x_0$ then $I$ is called the $s$-reflection in $x_0$.
\end{definition}
Since $I$ is defined on the whole of the open interval $E$, it is necessarily continuous. This, in turn, implies that it is  a decreasing involution such that $I(l)=r$. The objective of our next result is to show the existence of the inversion of $E$ in the direction of $s$ in case when $s$ is bounded on $E$ i.e. for diffusions of type 1.

 \begin{proposition}\label{Prop-Involutions-0}  Let $x_0\in E$ and assume that $s$ is bounded on $E$. Then, the following assertions hold.
\begin{itemize}
\item[ 1)] The inversion of $E$ in the direction of $s$ with fixed point $x_0$ is given by
\begin{eqnarray}\label{involution-expression-2}
 I(x)=
 \left\{
\begin{array}{lll}
  s^{-1}\left( s^2(x_0)/s(x)\right) &\hbox{if}\ s^2(x_0)= s(l)s(r);& \\
  s^{-1}\left(2s(x_0)-s(x) \right) &\hbox{if}\ 2s(x_0)=s(l)+s(r);\\
  s^{-1}\left(  (s(x)+a)/(b s(x)-1)\right) &\hbox{otherwise,}  & \\
\end{array}
\right. \nonumber
\end{eqnarray}
where
$$
a=\left(2s(l)s(r)-s(x_0)\left(s(l)+s(r) \right)\right)\left(s^2(x_0)-s(l)s(r)\right)^{-1}s(x_0)\nonumber
$$
and
$$
b=\left(2s(x_0)-\left(s(l)+s(r) \right)\right)\left(s^2(x_0)-s(l)s(r)\right)^{-1}.
$$
\item[ 2)] If $2s(x_0)\neq s(l)+s(r)$ then  $I=h^{-1}(1/h)$ where
\begin{eqnarray}\label{harmonic-h}
 h(x)=
 \left\{
\begin{array}{lll}
(bs(x)-1)/(bs(x_0)-1) &\hbox{if}\ s^2(x_0)\neq s(l)s(r); \nonumber \\
  s(x)/s(x_0) &\hbox{otherwise}.& \nonumber\\
\end{array}
\right.
\end{eqnarray}
Furthermore, $h$ is continuous, strictly monotonic and satisfies $h\neq 0$ on $E$.
\end{itemize}
 \end{proposition}
 \begin{proof} 1) We shall first assume that $s^2(x_0)\neq s(l)s(r)$. We look for $I$ such that $s\circ I \circ s^{-1}(x)=(x+a)/(bx-1)$ where $a$ and $b$ are reals satisfying $ab+1\neq 0$. Since the images of $l$ and $x_0$  by $I$ are respectively $r$ and $x_0$, we get the following linear system of equations
\begin{eqnarray}\label{system}
 \left\{
\begin{array}{lll}
   bs^2(x_0)-a &=& 2s(x_0);\\
 b s(l)s(r)-a &=& s(l)+s(r).  \\
\end{array}
\right.
\end{eqnarray}
Solving it yields $a$ and $b$. We need to show that $ab+1\neq 0$. A manipulation of the first equation of (\ref{system}) shows that $1+ab=(bs(x_0)-1)^2$. In fact, we even have the stronger fact that $s(x)\neq 1/b$ on $E$ which is seen from $1/b>s(r)$ if $2s(x_0)>s(l)+s(r)$ and $1/b<s(l)$ if $2s(x_0)<s(l)+s(r)$. Finally, if $s^2(x_0)=s(l)s(r)$ then clearly $I(x)=s^{-1}(s^2(x_0)/s(x))$.

 2) Assume that $2s(x_0)\neq s(l)+s(r)$. Let us first consider the case $s^2(x_0)\neq s(l)s(r)$. Setting  $h(x)=(s(x)-1/b)/\delta$  we then obtain  $$h^{-1}(1/h(x))=s^{-1}\left((s(x)+(b\delta^2-1/b))/(bs(x)-1) \right).$$ Thus, the equality $I(x)=h^{-1}(1/h)$ holds if and only if $\delta=\pm \sqrt{1+ab}/b$ which, in turn, implies that $h(x)=\pm (bs(x)-1)/(bs(x_0)-1)$. Since $h$ is positive, we take the solution with plus sign.  Since $h$ is an affine transformation of $s$, it is strictly monotone and continuous on $E$. Finally, because $s\neq 1/b$, as seen in the proof of 1), we conclude that $h$ does not vanish on $E$. The case $s^2(x_0)= s(l)s(r)$ is completed by observing that this corresponds to letting $b\rightarrow\infty$ and $\delta=s(x_0)$ above which gives the desired expression for $h$. \QED
 \end{proof}
 Now, we are ready to fully describe the set of inversions associated to the four types of diffusions described in the introduction. The proof of the following result is omitted, keeping in mind that
 when $s$ is unbounded on $E$, by approximating $E$ by a family of intervals  $(\alpha, \beta)\subset E$ where $s$ is bounded,   using continuity and letting $\alpha\rightarrow l$ and  $\beta\rightarrow r$ we obtain an expression for $I$.

 \begin{proposition}\label{kinds-involutions} All kinds of inversions of $E$ in the direction of $s$ with fixed point $x_0\in E$ are described as follows.
\begin{itemize}
 \item[1)] $X$ is of type 1 with $2s(x_0)\neq s(l)+s(r)$ then the inversion is given in Proposition~\ref{Prop-Involutions-0}.
 \item[2)] $X$ is of type 2 then we have
\begin{equation}\label{involution-expression-0}\nonumber
I(x)=s^{-1} \left(  s(l)+(s(x_0)-s(l))^2/(s(x)-s(l)) \right).
\end{equation}
\item[3)] $X$ is of type 3 then we have
\begin{equation}\label{involution-expression-1}\nonumber
I(x)=s^{-1} \left(  s(r)-(s(r)-s(x_0))^2/(s(r)-s(x)) \right).
\end{equation}
\item[4)] $X$ is of type 4  or type 1 with $2s(x_0)=s(l)+s(r)$ then  $I$ is the $s$-reflection in $x_0$.
\end{itemize}
\end{proposition}

\begin{remark}
Observe  that the  $s$-inversions  described in Proposition \ref{kinds-involutions} solve  $G(x,y)=0$ in $y$, where $G$ is the symmetric function  $G(x,y)=As(x)s(y)-B(s(x)+s(y))-C$ for some reals $A$, $B$ and $C$. This is in agreement with the fact that $I$ is an involution, see \cite{Wiener-al-2002}.
\end{remark}
\begin{remark}
The inversion in the direction of $s$ with fixed point $x_0$ does not depend on the particular choice we make of $s$. Tedious calculations show that the inversion of $E$ in the direction of $s$ is invariant under a M\"obius transformation of $s$.
\end{remark}

Going back to Proposition \ref{Proposition1} we can express the inversions of Proposition \ref{kinds-involutions} in terms of the harmonic function $h$ instead of the reference scale function $s$. For that we need to compute the positive harmonic function $h$ described in the introduction for each of the types 1--4 of diffusions. We easily get
   \begin{eqnarray}\label{harmonic-h-general}
 \rule{0,7cm}{0cm}h(x)=
 \left\{
\begin{array}{lll}
\frac{bs(x)-1}{bs(x_0)-1} &\hbox{if}\  X\ \hbox{is of type 1 and}\  2s(x_0)\neq s(l)+s(r); & \\
  1 &\hbox{if}\ X\ \hbox{is of type 1 and}\ 2s(x_0)=s(l)+s(r)\ \hbox{or}\ \hbox{type 4};&\\
  \frac{s(x)-s(l)}{s(x_0)-s(l)} &\hbox{if}\  X\ \hbox{is of type 2};&\\
  \frac{s(r)-s(x)}{s(r)-s(x_0)} &\hbox{if}\ X\ \hbox{is of type 3}.&
\end{array}
\right.
\end{eqnarray}
 Note that the case where $X$ is of type 1 and $x_0$ is the $s$-geometric mean is covered in the first case by letting $b\rightarrow \infty$ to obtain $h(x)=s(x)/s(x_0)$.  In the following result, which generalizes Proposition \ref{Proposition1}, we note that
the first assertion could serve as the probabilistic definition for $s$-inversions.

\begin{proposition}\label{Proposition4} The following assertions hold true.
\begin{itemize}
\item[1)] A function $I:E\to E$ is the $s$-inversion  with fixed point $x_0\in E$ if and only if $I(E)=E$ and for all $x\in E$
\begin{equation}\label{Condition-involution}
\mathbb{P}_{x_0}(H_{x}<H_{I(x)})= \mathbb{P}_{x_0}^*(H_{I(x)}<H_x)
\end{equation}
where $\mathbb{P}^*$ is the distribution of the Doob transform of $X$ by some positive harmonic function $k$.
 Furthermore, the $s$-inversion and the $k$-inversion of $E$ with fixed point $x_0$ are equal and $k=h$.
\item[2)] Let $\mathbb{Q}_{x_0}$ be the probability law of $(I(X_t), t\leq \zeta)$ when $X_0=x_0$. Then formula (\ref{Condition-involution}) holds true when $\mathbb{P}_{x_0}$ is replaced by $\mathbb{Q}_{x_0}$. We call the process $(I(X_t), t\leq \zeta)$ the inverse with respect to $x_0$ of $(X_t, t\leq \zeta)$. The fixed point $x_0$ of the involution $I$ is seen to be the unique level at which the paths of the latter processes intersect.
\end{itemize}
\end{proposition}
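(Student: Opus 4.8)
The plan is to read the identity (\ref{Condition-involution}), for a fixed $x\neq x_0$ with $x_0$ strictly between $x$ and $I(x)$, as an instance of Proposition \ref{Proposition1} on the subinterval $J_x:=(x\wedge I(x),\,x\vee I(x))$. Killing $X$ and $X^*$ on exit from $J_x$ produces a type $1$ diffusion on $J_x$ together with its dual (killing commutes with the Doob $h$-transform), with reference scale function $s|_{J_x}$, harmonic function $h|_{J_x}$ and dual scale function $-1/h|_{J_x}$, and with boundary points $x$ and $I(x)$. Proposition \ref{Proposition1}.1 applied to this pair, with $\{l,r\}$ there replaced by $\{x,I(x)\}$, then states that
$$\mathbb{P}_{x_0}(H_x<H_{I(x)})=\mathbb{P}^*_{x_0}(H_{I(x)}<H_x)$$
holds iff $x_0$ is the $h$-geometric mean of $\{x,I(x)\}$, i.e.\ $h(x)h(I(x))=h^2(x_0)$; in the two cases with $h\equiv1$ (type $4$, and type $1$ with $2s(x_0)=s(l)+s(r)$) one argues instead directly from (\ref{hitting}), and the condition becomes $s(x)+s(I(x))=2s(x_0)$, both sides of the displayed identity then being $1/2$.

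For the forward implication, let $I$ be the $s$-inversion with fixed point $x_0$; being a strictly decreasing involution fixing $x_0$, it automatically puts $x_0$ strictly between $x$ and $I(x)$ for every $x\neq x_0$. Substituting the explicit expression (\ref{harmonic-h-general}) for $h$ into the formulas of Propositions \ref{Prop-Involutions-0} and \ref{kinds-involutions} shows, type by type, that $I=h^{-1}(1/h)$ when $h$ is non-constant and that $I$ is the $s$-reflection in $x_0$ when $h\equiv1$; this already settles the last clause of the proposition, since $h$ is an affine (possibly decreasing) function of $s$ and affine conjugation preserves the class $\mathcal{MI}$, so the $h$-inversion and the $s$-inversion with fixed point $x_0$ coincide. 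Using $h(x_0)=1$, the relation $I=h^{-1}(1/h)$ reads $h(x)h(I(x))=h^2(x_0)$, while the $s$-reflection gives $s(x)+s(I(x))=2s(x_0)$; in either case the criterion of the first paragraph yields (\ref{Condition-involution}) for all $x\neq x_0$ ($x=x_0$ being trivial, both sides vanishing), and $I(E)=E$ is part of the definition of an $s$-inversion.

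For the converse, assume $I(E)=E$ and that (\ref{Condition-involution}) holds for all $x$. Taking $x=x_0$ and using $H_{x_0}=0$ a.s.\ under both $\mathbb{P}_{x_0}$ and $\mathbb{P}^*_{x_0}$ ($\mathbb{P}^*_{x_0}$ coinciding with $\mathbb{P}_{x_0}$ on $\mathcal F_{0+}$ since the $h$-transform has density $1$ there) forces $I(x_0)=x_0$. The step I expect to be the main obstacle is to show next that, for $x\neq x_0$, the point $I(x)$ lies strictly on the side of $x_0$ opposite to $x$, so that $x_0\in J_x$ and the localisation above is legitimate: if $x$ and $I(x)$ were strictly on the same side of $x_0$, then one member of (\ref{Condition-involution}) vanishes while the other is strictly positive, a contradiction — and the residual possibility $I(x)=x$ is excluded as soon as $I$ is a strictly decreasing involution, which an $s$-inversion is. Granting this, fix $x<x_0<I(x)$: both members of (\ref{Condition-involution}) are then strictly monotone functions of $I(x)$ (the left one through $s$, the right one through $1/h$), so the identity determines $I(x)$ uniquely, and comparison with the value furnished by the $s$-inversion of Proposition \ref{kinds-involutions} identifies $I$ with it.

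Part 2) is purely formal: writing $Y_t=I(X_t)$, the fact that $I$ is a bijective involution of $E$ gives $\{Y_t=y\}=\{X_t=I(y)\}$, whence the first hitting time of $y$ by $Y$ equals the first hitting time of $I(y)$ by $X$; inserting this identity (together with $I\circ I=\mathrm{id}$) into (\ref{Condition-involution}) reduces the statement for $\mathbb{Q}_{x_0}$ to the one just proved for $\mathbb{P}_{x_0}$ in Part 1). Apart from the side-of-$x_0$ bookkeeping in the converse, every step is either a direct citation of Proposition \ref{Proposition1} or an elementary computation with scale functions.
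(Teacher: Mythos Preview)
Your argument follows essentially the same route as the paper's: both reduce (\ref{Condition-involution}) to the algebraic relation $h(x)\,h(I(x))=h^2(x_0)$ (respectively $s(x)+s(I(x))=2s(x_0)$ when $h\equiv 1$) via the scale-function formulas for hitting probabilities under $\mathbb{P}$ and $\mathbb{P}^*$, your localisation to $J_x$ and appeal to Proposition~\ref{Proposition1} being simply a repackaging of the paper's direct computation with $s$ and $-1/h$.

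One small point worth flagging: in the converse you correctly note that (\ref{Condition-involution}) alone does not exclude $I(x)=x$ for $x\neq x_0$ (both sides then vanish), but you then dismiss this case by saying it ``is excluded as soon as $I$ is a strictly decreasing involution, which an $s$-inversion is'' --- circular, since that is precisely what you are proving. The paper's proof does not address this case either (it simply writes down the formula for $I(x)$ as if $x_0$ were automatically between $x$ and $I(x)$), so this is a loose end shared with the original rather than a defect specific to your method; as literally stated, the converse seems to need the tacit hypothesis $I(x)\neq x$ for $x\neq x_0$.
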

\begin{proof} 1) If $x_0$ is the $s$-arithmetic mean of $\{l, r \}$ or $X$ is of type 4 then we are looking for $I:E\rightarrow E$ such that $\mathbb{P}_{x_0}(H_{x}<H_{I(x)})=1/2$. Using (\ref{hitting}) we get $(s(x_0)-s\circ I(x))/(s(x)-s\circ I(x))=1/2$ which gives that $I$ is the $s$-reflection. For the other cases, using (\ref{Condition-involution}) and the fact that $-1/h$ is a scale function for $X^*$,
 we find  that $I(x)=h^{-1}(h(x_0)^2/h(x))$ so that $I$ is an $h$-inversion with fixed point $x_0$. The ``only if'' part is straightforward following a similar reasoning to that of the proof of Proposition \ref{Proposition1} giving  $I$ to be either the $h$-reflection or the $h$-inversion with fixed point $x_0$. \\
 2) The first part is easily seen by using the first assertion. The interpretation for the fixed point $x_0$ follows from the fact that $x_0$ is the unique fixed point of $I$. \QED
\end{proof}
For completeness, we explain now how to define rigorously a diffusion $Y_t$ obtained by conditioning a transient diffusion $X_t$ to hit one boundary of an interval
before another with a prescribed probability $a$. By a  natural definition, it holds if
 for any bounded $\mathcal{F}_t^X$-measurable functional $G$ and $t>0$, we have
\begin{eqnarray}
\mathbb{E}_{x_0}[G(Y_s, s\leq t), t< \zeta]
&=&a \mathbb{E}_{x_0}[G(X_s, s\leq t), t<\zeta|H_l<H_r]+\nonumber\\&&(1-a)\mathbb{E}_{x_0}[G(X_s, s\leq t), t<\zeta|H_r<H_l]\nonumber
\end{eqnarray}
 For  conditioning a transient diffusion to avoid one of the boundaries we refer, for example,  to
(\cite{IIzuka-Maeno-Tomizaki-2006}, \cite{Maeno-2003},  \cite{Salminen-Vallois-Yor-2007}).

 We show in  the following  Proposition  that the dual process $X^*$ can be realized as  $X$ conditioned in the sense of Doob to exits the segment
$[l,r]$ at the endpoints $l$ and $r$ with some specified probabilities.
\begin{proposition}\label{conditioning-avoiding}
 Assume that $X$ is transient  and let $h$  be  given by (\ref{harmonic-h-general}).  Let $p$ be the probability that $X$, when started at $x_0$, exits $[l,r]$ at $l$.  $X$ conditioned to exit $[l,r]$  at $l$ with probability $q=1-p$ is a realization of the dual $X^*$ of $X$ with respect to $h(x)m(dx)$. \end{proposition}
\begin{proof}
By construction, we have $h(x_0)=1$. Assume at first that $X$ is of type 1. Let us decompose $h$, in terms of $h_l$ and $h_r$ which are defined below, as follows
$$h(x)=q^*\frac{h(x)-h(l)}{h(x_0)-h(l)}+p^*\frac{h(r)-h(x)}{h(r)-h(x_0)}:=q^*h_r(x)+p^*h_l(x)$$
where
\begin{eqnarray}\nonumber
 q^*=\frac{h(x_0)-h(l)}{h(r)-h(l)}h(r)=\frac{h^*(x_0)-h^*(l)}{h^*(r)-h^*(l)}=\mathbb{P}_{x_0}^*(H_r<H_l)
\end{eqnarray}
and
\begin{eqnarray}\nonumber
p^*=\frac{h(r)-h(x_0)}{h(r)-h(l)}h(l)=\frac{h^*(r)-h^*(x_0)}{h^*(r)-h^*(l)}=\mathbb{P}_{x_0}^*(H_l<H_r).
\end{eqnarray}
But, we have that $q^*=p$ and $p^*=q$ when $X$ and $X^*$ are started at $x_0$. Thus,  for any bounded $\mathcal{F}_t^X$-measurable functional $G$ and $t>0$, we can write
\begin{eqnarray}
\mathbb{E}_{x_0}^*[G(X_s, s\leq t), t< \zeta]&=&\mathbb{E}_{x_0}[h(X_t)G(X_s, s\leq t), t< \zeta] \nonumber\\
&=&q\mathbb{E}_{x_0}[h_l(X_t)G(X_s, s\leq t), t< \zeta] \nonumber\\
&+&p\mathbb{E}_{x_0}[h_r(X_t)G(X_s, s\leq t), t< \zeta].\nonumber
\end{eqnarray}
Next, since our assumptions imply that $p= \mathbb{P}_{x_0}(H_l<H_r)\in (0,\, 1)$, we have
\begin{eqnarray}
\mathbb{E}_{x_0}[h_l(X_t)G(X_s, s< t), t< \zeta]&=&\mathbb{E}_{x_0}[G(X_s, s\leq t) \frac{\mathbb{P}_{X_t}[H_l<H_r]}{\mathbb{P}_{x_0}(H_l<H_r)}, t< \zeta] \nonumber\\
&=&\mathbb{E}_{x_0}[G(X_s, s\leq t), t<\zeta|H_l<H_r] \nonumber
\end{eqnarray}
where we used the strong Markov property for the last equality. Similarly, for the other term, since $q\in (0,\, 1)$  we get
\begin{eqnarray}
\mathbb{E}_{x_0}[(h_r(X_t)G(X_s, s\leq t), t< \zeta]
=\mathbb{E}_{x_0}[G(X_s, s\leq t), t<\zeta|H_r<H_l].\nonumber
\end{eqnarray}
The last two equations imply our assertion.
Assume now that $h(r)=\infty$. Then $h(l)=0$ and  $\mathbb{P}_{x_0}$-a.s. all trajectories of the process $X$ tend to $l$  and  $p= \mathbb{P}_{x_0}(H_r<H_l)=0$.  We follow \cite{Salminen-Vallois-Yor-2007}
to define $X$ conditioned to avoid $l$ (i.e. never to hit $l$ in a positive time) as follows. For any bounded $\mathcal{F}_t^X$-measurable functional $G$ and $t>0$, we set
\begin{eqnarray}
\mathbb{E}_{x_0}^*[G(X_s, s\leq t), t< \zeta]
&=&\lim_{a\rightarrow r}\mathbb{E}_{x_0}[G(X_s, s\leq t), t<\zeta|H_a<H_l] \nonumber\\
&=&\lim_{a\rightarrow r}\mathbb{E}_{x_0}[G(X_s, s\leq t), t<H_a<H_l]/\mathbb{P}_{x_0}(H_a<H_l) \nonumber\\
&=&\lim_{a\rightarrow r}\mathbb{E}_{x_0}[h_r(X_t)G(X_s, s< t), t<H_a\wedge H_l] \nonumber\\
&=&\mathbb{E}_{x_0}[h(X_t)G(X_s, s\leq t), t<\zeta] \nonumber
\end{eqnarray}
where we used the strong Markov property for the third equality and the monotone convergence theorem for the last one. In this case $\mathbb{P}^*_{x_0}$-a.s  all trajectories of the process $X^*$ tend to $r$ and $p^*= \mathbb{P}_{x_0}^*(H_l<H_r)=0$ which completes the proof of the statement.  The case $h(l)=-\infty$ and $h(r)=0$ can be treated similarly.  \QED
\end{proof}

\begin{remark} From the point of view of Martin boundaries, the functions $h_l$ and $h_r$ which appear in the proof of Proposition \ref{conditioning-avoiding} are the minimal excessive functions attached to the boundary points $l$ and $r$, see (\cite{Salm-Boro}, \cite{Salminen-82}, \cite{Salminen-85}). That is the Doob $h$-transformed processes obtained by  using $h_l$ and $h_r$ tend a.s., respectively, to $l$ and $r$. Harmonic functions having a representing measure with support not included in  the boundary set of $E$ are not considered in this paper since we do not allow killings inside $E$.
\end{remark}

\section{Inversion of diffusions}

  Let $X\in \mathcal{DF}(E) $ and $s$ be a scale function for $X$. For $x_0\in E$, let $I:E\rightarrow E$ be the inversion of $E$ in the direction of $s$ with fixed point $x_0$. Let $h$ be the positive harmonic function specified by (\ref{harmonic-h-general}).
Let $X^*$ be the dual of $X$ with respect to $h(x)m(dx)$. As aforementioned, the distribution of $X^*$ is obtained as a
Doob $h$-transform of the distribution of $X$ by using the harmonic function $h$, as given in (\ref{Doob-h-transform}).  Clearly, if $X$ is of type 1 (resp. of type 2 and drifts thus to $l$, of type 3 and drifts thus to $r$ or of type 4) then $X^*$ is of type 1 (resp. of type 3 and drifts thus  to $r$, of type 2 and  drifts thus to $l$ or of type 4). It is easy to see that the inversions of $E$ in the direction of $s$ given in Proposition \ref{kinds-involutions} are differentiable on $E$. Recall that for a fixed $t<\zeta$, $\tau_t$ is the inverse of the strictly increasing and continuous additive functional $A_t:=\int_0^t I'^2(X_s)\sigma^2(X_s)/ \sigma^2\circ I(X_s)\ ds$; $\tau_t^{*}$ and $A_t^{*}$ are the analogue objects associated to the dual $X^{*}$. Recall that the speed measure $m(dy)=2dy/(\sigma^2 s')$ of $X$ is uniquely determined by
$$
\mathbb{E}_x[H_\alpha\wedge H_\beta]= \int_J G_{J}(x,y)\, m(dy)
$$
where
\begin{eqnarray}\label{killed}\nonumber
G_J(x,y) =c
   (s(x\wedge y)-s(\alpha))(s(\beta)-s(x\vee y))/(s(\beta)-s(\alpha))
\end{eqnarray}
for any $J=(\alpha, \beta)\subsetneq E$ and all $ x,y\in J$, where $c$ is a normalization constant and  $G_J(.,.)$ is the potential kernel density relative to $m(dy)$ of $X$ killed when it exits $J$; see for instance \cite{ry99} and  \cite{RW87}.
Recall that $-1/h$ is a scale function and $m^{*}(dx):=h^2(x) m(dx)$ is the speed measure of $X^*$.  We are ready to state the main result in this paper.
 The proof we give is based on
the resolvent method for the identification of the speed measure, see (\cite{ry99}, \cite{RW87}, \cite{Sharpe}). Other possible methods of proof are commented in Remarks \ref{proof2}
and \ref{proof3}.

\begin{theorem}\label{equality-in-distribution} With the previous setting, let $I$ be the $s$-inversion of $E$ with fixed point $x_0\in E$. Assuming that $X_0,\, X^*_0\in E$ are such that $I(X_0)=X^*_0$ then the following assertions hold true.
\begin{itemize}
\item[1) ]  For all  $t<\zeta$, $\tau_t$ and $A_t^{*}$ have the same distribution.
\item[2) ]  The processes $(X^*_t, t\leq \zeta^*)$ and $(I(X_{\tau_t}), t\leq A_{\zeta})$
have the same law.
\item[3) ]  The processes $(X_t, t\leq \zeta)$ and  $(I(X^*_{\tau_t^{*}}), t\leq A^{*}_{\zeta^*} )$  have the same law.
\end{itemize}
\end{theorem}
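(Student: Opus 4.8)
The statement packages together a deterministic spatial transformation (the $s$-inversion $I$) with a random time change ($A_t$ and its inverse $\tau_t$), and asserts that applying both to $X$ produces a copy of the Doob $h$-transform $X^*$. The natural route is to work on the level of generators and identify, for the transformed process, its scale function and speed measure, since a one-dimensional regular diffusion is determined in law (up to the starting point, which is fixed by $I(X_0)=X_0^*$) by these two objects. So my plan is: first show that the purely spatial process $Y_t := I(X_t)$ is again a diffusion in $\mathcal{DF}(E)$, compute its scale function and speed measure, and observe that after the time change $t\mapsto \tau_t$ the scale function is unchanged (time changes do not affect scale) while the speed measure gets multiplied by the Radon--Nikodym factor coming from $A$; then check that the resulting scale/speed pair is exactly $(-1/h,\ h^2 m)$, which by the discussion preceding the theorem is the scale/speed pair of $X^*$. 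Assertion (3) is then just assertion (2) with the roles of $X$ and $X^*$ interchanged, using that $I$ is an involution and that $\tau^*=A$, $A^*=\tau$ when started from corresponding points (the ``two features of the clocks'' mentioned in the introduction); assertion (1) is the distributional identity $\tau_t \ed A_t^*$, which should fall out once one recognises that $A_t$ for $X$ and $A_t^*$ for $X^*$ are each other's inverses in the appropriate sense, combined with (2).

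\medskip

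\noindent\textbf{Key steps, in order.}
First, I would compute how $I$ acts on the generator. Since $I = s^{-1}\circ \omega \circ s$ with $\omega\in\mathcal{MI}$ a M\"obius involution, and since $s$ is a scale function for $X$ (so $L$ has no zeroth-order term and, in natural scale, $X$ is a local martingale), It\^o's formula applied to $Y_t=I(X_t)$ yields
\begin{equation*}
dY_t = I'(X_t)\sigma(X_t)\,dW_t + \Big(I'(X_t)b(X_t) + \tfrac12 I''(X_t)\sigma^2(X_t)\Big)dt.
\end{equation*}
Because $s\circ I$ is (up to an affine map) a M\"obius function of $s$, one checks that $-1/h$ — equivalently $s\circ I$ itself up to affine normalisation — is a scale function for $Y$; this is precisely where the algebraic identities of Proposition~\ref{Prop-Involutions-0} and the relation $I=h^{-1}(1/h)$ of Proposition~\ref{Proposition4} get used. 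So $Y$ has diffusion coefficient $\sigma_Y(y)^2 = I'^2(I^{-1}(y))\,\sigma^2(I^{-1}(y)) = I'^2(I(y))\sigma^2(I(y))$ (using $I^{-1}=I$) and its speed measure is $m_Y(dy) = 2\,dy/(\sigma_Y^2(y)\, (s\circ I)'(y))$. Second, I would apply the time change by $\tau_t$: for a continuous additive functional $A_t=\int_0^t a(X_s)\,ds$, the process $Y_{\tau_t}$ has the same scale function as $Y$ but speed measure $a\circ I^{-1}(y)^{-1}$ times... — more carefully, the time change multiplies the speed measure by the density of the clock, so with $a(x) = I'^2(x)\sigma^2(x)/\sigma^2(I(x))$ one gets, after substituting and cancelling, that the time-changed process $(I(X_{\tau_t}))$ has speed measure proportional to $h^2(y)\,m(dy)$ and scale function $-1/h$. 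Third, invoke the fact (stated before the theorem) that $X^*$ has exactly scale $-1/h$ and speed $h^2 m$, and that $I(X_0)=X_0^*$ fixes the starting point; uniqueness in law of regular diffusions given (scale, speed, start) then gives assertion (2). Fourth, for (3) swap $X\leftrightarrow X^*$: since $(X^*)^* = X$ (as $1/h$ generates the dual of the dual), $I$ is the same involution (it depends only on $h$, and $X^*$'s harmonic function is $1/h$, whose inversion $h^{-1}\circ(1/(1/h)) = h^{-1}\circ h$... is again $I$), and the clock $A^*$ is built from the same formula, applying (2) to $X^*$ gives $(X_t) \ed (I(X^*_{\tau^*_t}))$. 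Fifth, for (1), observe $\tau_t = A_t^*$ by the identity $\tau=A^*$ noted in the introduction — this follows because $\tau_t$ is the time spent by $X$ to make $Y_{\tau}=I(X_\tau)$ advance by an amount of its own clock equal to $t$, which is literally $\int_0^t a^*(X^*_s)\,ds$ once (2) identifies $I(X_\tau)$ with $X^*$ — so $\tau_t$ and $A_t^*$ are not merely equal in distribution but (after the identification in law) the same functional.

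\medskip

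\noindent\textbf{Main obstacle.} The genuinely delicate point is the bookkeeping in the time-change step: tracking exactly how the speed measure transforms under $t\mapsto\tau_t$ and verifying that the substitutions $I^{-1}=I$, together with the chain-rule factors $I'$, the scale-derivative factors $(s\circ I)'$, and the clock density $a$, all combine to give precisely $h^2 m$ and not some other measure with the same scale. The resolvent method the authors announce is designed exactly to sidestep subtle regularity questions here: rather than manipulating speed measures formally, one shows that for suitable $f$ the resolvent $\int_0^\infty e^{-\lambda t}\e[f(I(X_{\tau_t}))]\,dt$ agrees with the $X^*$-resolvent, which reduces everything to an occupation-time / Green-function identity $\int G_J(x,y) a(y)^{-1}\,\ldots$ that can be verified via the explicit $G_J$ recalled above. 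A secondary subtlety is boundary behaviour: one must check that $I$ maps $l\leftrightarrow r$ continuously, that the type of $X^*$ matches (type $1\to1$, $2\to3$, $3\to2$, $4\to4$, as stated), and that the clocks blow up or stay finite consistently so that the lifetimes match, $\zeta^* \ed A_\zeta$ — this is where the Ciesielski--Taylor/Biane-type identity enters and where the type analysis of Proposition~\ref{kinds-involutions} must be used case by case.
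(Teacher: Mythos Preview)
Your overall strategy---identify the scale function and speed measure of $\eta_t:=I(X_{\tau_t})$ and match them to those of $X^*$---is exactly the paper's. The execution differs slightly: your primary route is to apply It\^o's formula to $I(X_t)$, read off the diffusion coefficient, and then track how the time change $\tau$ modifies the speed measure; this is precisely the SDE/Girsanov approach the paper relegates to Remark~\ref{proof2} (and to an ``old proof'' left after the bibliography). The paper's main proof instead takes the occupation-time/Green-function route you describe as the fallback: it computes $\mathbb{E}_{I(x)}[H^\eta_\alpha\wedge H^\eta_\beta]$ directly as $\int_{I(J)} G_{I(J)}(I(x),y)\,dA_y$, performs the change of variable $y\mapsto I(y)$, and checks in one line that $G_{I(J)}(I(x),I(y))=G^*_J(x,y)$ while the Jacobian and clock density combine to give $m^*(dy)=h^2(y)\,m(dy)$. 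This avoids the bookkeeping you flag as the main obstacle. One clean shortcut in the paper worth noting for the scale step: since $I=h^{-1}(1/h)$, one has $-1/h\circ I=-h$, so $-1/h(\eta_t)=-h(X_{\tau_t})$ is a time-changed local martingale, which immediately shows $-1/h$ is a scale for $\eta$ without any It\^o computation. For (1) the paper argues as you do, but in the reverse order: it first shows the purely algebraic identity $\tau_t=A_t^\eta$ (by differentiating $A_{\tau_t}=t$), and only then invokes (2) to pass from $\eta$ to $X^*$.
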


\begin{remark}
  Note that if the starting point  is the fixed point of $I$, i.e. $X_0=x_0$, then both processes $X$ and $X^*$ start from $x_0$.  However, our result holds true and is proven in the general case $X_0\in E$ provided that $X^*_0=I(X_0)$.
\end{remark}

\begin{proof}[of Theorem \ref{equality-in-distribution}]
1)  Let $t>0$ be fixed and  set $\eta_t=I(X_{\tau_t})$. Because $\tau_t$ is the inverse of $A_t$, we can write $A_{\tau_t} = t$. Differentiating and extracting the derivative of $\tau_t$ yields
\begin{eqnarray}\label{deriv-tau}\nonumber
\frac{d}{dt} \tau_t = \frac{\sigma^2\circ I (X_{\tau_t})} {I'\,^2 (X_{\tau_t}) \sigma^2  (X_{\tau_t})}.
\end{eqnarray}
Integrating yields
\begin{eqnarray} \nonumber
\tau_t =\int_0^t \left({I'}\sigma/(\sigma\circ I) \right)^{-2}(X_{\tau_s}) ds
=A_t^{\eta}.
\end{eqnarray}
The proof of the first assertion is complete once we have shown that $\eta$ and $X^*$ have the same distribution which will be done in the next assertion.

2) First, assume that $h$ is not constant. In this case, $x\rightarrow -1/h(x)$ is a scale function for $\eta$ since $-1/h\circ I(X_{\tau_t})=-h(X_{\tau_t})$ is a continuous local martingale. Next, let $J=(\alpha, \beta)$ be an arbitrary subinterval of $E$. We proceed by identifying the speed measure of $\eta$ on $J$.  By using the fact that the hitting time $H^\eta_y$ of $y$ by $\eta$ equals $A_{H_{I(y)}}$ for $y\in E$,  we can write
\begin{eqnarray}
\mathbb{E}_{I(x)}\left[ {H^\eta_{\alpha}\wedge H^\eta_{\beta}}\right]
&=&\mathbb{E}_{I(x)}\left[\int_0^{H_{I(\alpha)}\wedge H_{I(\beta)}}dA_r\right] \nonumber\\
&=&\int_{I(\beta)}^{I(\alpha)} G_{I(J)}(I(x), y)I'^2(y)\sigma^2(y)\{ \sigma^2\circ I(y)\}^{-1}\, m(dy) \nonumber\\
&=&2\int_{I(\beta)}^{I(\alpha)}  G_{I(J)}(I(x), y)I'^2(y) \{\sigma^2\circ I(y)s'(y)\}^{-1}\, dy \nonumber\\
&=&2\int_{\alpha}^{\beta} G_{I(J)}(I(x), I(y)) \{ \sigma^2(y)(-h\circ I)'(y)\}^{-1}\, dy.\nonumber
\end{eqnarray}
On the one hand, we readily check that $\{ \sigma^2(y)(s\circ I)'(y)\}^{-1} dy=h^2(y)m(dy)=m^*(dy)$ for $y\in J$. On the other hand, we have
\begin{eqnarray}
G_{I(J)}(I(x), I(y))&=&
   \left(h(I(x)\wedge I(y))-h(I(\beta))\right)\frac{h(I(\alpha))-h(I(x)\vee I(y))}{h(I(\alpha))-h(I(\beta))} \nonumber\\
   &=&\left(-h^*(x\vee y)+h^*(\beta)\right)\frac{-h^*(\alpha)+h^*(x\wedge y)}{-h^*(\alpha)+h^*(\beta)} \nonumber \\
   &=&G_J^{*}(x,y), \nonumber
\end{eqnarray}
where $h^*=-1/h$ and $G^*_J$ is the potential kernel density of $X^*$ relative to $m^*(dy)$. The case when $h$ is constant can be dealt with similarly but by working with $s$ instead of $h$.  This shows that the speed measure of $\eta$ is the same as that of $X^{*}$ in all cases. Now,  since $\tau^X_t=A_t^{\eta}$ we get $\tau^X_t=A_t^{*}$ which, in turn, implies that $A_t^X=\tau_t^{*}$.   Finally, using the fact that $I$ is an involution gives
$$
H^{*}_y
=\inf\{s>0, I(X_{A^{*}_s})=y\}
=\tau_{H_{I(y)}}^{*}=A_{H_{I(y)}}
$$
for  $y\in E$.  The assertion is completed by letting $y$ tend to either of the boundaries to find $\zeta^*=A_{\zeta}$ and $\zeta=A^*_{\zeta^*}$ as desired.

3) The proof is easy using 1) and 2), the fact that $I$ is an involution and time changes.  \QED
 \end{proof}

\begin{remark}
The resolvent method used in the proof of Theorem \ref{equality-in-distribution} suggests that it could be not necessary to suppose that $X$ solves a diffusion s.d.e. We conjecture that  variants of Theorem \ref{equality-in-distribution} are true for "nice" Markov processes. For example, analogue inversions are known for symmetric stable processes, see \cite{Bogdan-Tom}. As mentioned in the introduction, inversions of a more general class of self-similar Markov processes are studied in a joint work  with L. Chaumont.
\end{remark}

\begin{remark}\label{proof2}
 Since $X$ satisfies the s.d.e. (\ref{sde-x}), by   Girsanov's theorem, we see that $X^*$ satisfies $Y_t=X_0^* +\int_0^t \sigma(Y_s)dB_s+ \int_0^t (b+\sigma^2 h'/h) (Y_s)ds$ for $t<\zeta^*$
where $B$ is a Brownian motion which is measurable with respect to the filtration generated by $X^*$ and $\zeta^*=\inf\{s,\  Y_s=l \ \hbox{or}\ Y_s=r \}$, see for example \cite{GJ-Y}. Long calculations show that $\eta$ also satisfies the above s.d.e. which, by Engelbert-Schmidt condition (\ref{condition-1}), has a unique solution in law. This gives a second proof of Theorem \ref{equality-in-distribution}. Note that the use of It\^o's formula for $\eta$ is licit since $I\in\mathcal{C}^2(E)$.
\end{remark}
\begin{remark}\label{proof3}
Another way to view the main statements of Theorem \ref{equality-in-distribution} is the equality of generators
$$
\frac{1}{I'^2(x)}\frac{\sigma^2(x)}{\sigma^2(I(x))}L (f\circ I) (I(x))=\frac{1}{h(x)}L(hf) (x)=L^* f(x)
$$
and
$$
\frac{1}{I'^2(x)}\frac{\sigma^2(x)}{\sigma^2(I(x))}L^*(g\circ I)\: (I(x))=L g(x)
$$
for all $x\in E$,   $g\in \mathcal{D}(X)$ and $f\in \mathcal{D}(X^*)$. However, the main difficulty of this method of proof is the precise description of domains of generators.

\end{remark}
The focus now is on the cases where $X$ is transient and drifts a.s. either to $l$ or to $r$ as $t\rightarrow \zeta$, i.e.  $X$ is  of type 2 or 3. Clearly,  if $X$ is of type 2 (resp. type 3) then $X^*$ is of type 3 (resp. 2). Hence, for our purpose, it is enough to consider the case where $X$ is of type 2, in which case
  formula (\ref{hitting}) gives that $l$ is hit a.s. before $r$.
\begin{corollary}\label{equality-in-distribution-2} If $X\in \mathcal{DF}(E)$ is of  type 2  then  $\zeta =H_l$  and $A_{\zeta^*}^*=A_{H^*_r}^*$ have the same distribution. Furthermore, $A_{H^*_r}^*<\infty$ with probability 1 if and only if
\begin{equation}\label{entrance-condition}
\int_l^c (s(x)-s(l)) m(dx)<\infty, \quad  l<c<r.
\end{equation}
\end{corollary}
\begin{proof} The first claim is a straightforward consequence of Theorem  \ref{equality-in-distribution}. Next,  $\zeta^*=H^*_r$  because $X^*_t \rightarrow r$ a.s. as $t\rightarrow \zeta$. Now by Feller's classification of boundaries,   $\zeta<\infty$ a.s., and hence  $A^*_{H^*_r}<\infty$ a.s., if and only if (\ref{entrance-condition})  holds, see e.g.  p.~745 in \cite{Williams}.

\QED
\end{proof}
So far we made the assumption that attainable boundaries are killing and the process cannot be started from such points. We stress out however that, in the following result, we assume that $l$ is an entrance not an exit (and not an  absorbing or killing) point for the dual diffusion $X^*$.
The following result gives a path construction of the diffusion which we obtain when we apply the time reversal property to $X^*$ (or $X$).
\begin{corollary} \label{time-reversal}  Assume that $X\in \mathcal{DF}(E)$ is of type 2 and satisfies (\ref{entrance-condition}). Introduce the last passage time of $X$ at the fixed point  $x_0\in E$ of the $s$-inversion $I$, i.e.
$$
L_{x_0}=\sup\{t: X_t=x_0 \}.
$$
 Then the time inverted process $(X_{L^*_{x_0}-t}^*, t\leq L_{x_{0}}^*|X_0^*=l)$ and $(I(X^*_{\tau_t^*}), t<A_{H^*_r}^*|X_0=x_0)$ are identical in law.
\end{corollary}
\begin{proof} Let $h$ be given by (\ref{harmonic-h-general}). Then $s^*(x)=-1/h(x)$ is a scale function for $X^*$ and $s^*(l)=-\infty$. Thus, condition (\ref{entrance-condition}) implies that $l$ is an entrance not exit point for $X^*$, see \cite{Jacobsen}.  Furthermore, the process $X^*$, when started at $l$, is  $X$ conditioned never to return to $l$ in a positive time.  Now, on the one hand, we have that  $(X_{t}, t\leq H_l|X_0=x_0)$ and $(X_{L^*_{x_0}-t}^*, t\leq L_{x_{0}}^*|X_0^*=l)$ have the same distribution, see for example Theorem 2.5 in \cite{Williams}.  On the other hand, we know by Theorem  \ref{equality-in-distribution} that $(X_{t}, t< \zeta|X_0=x_0)$ and $(I(X^*_{\tau_t^*}), t<A_{\zeta^*}^*|X_0=x_0)$ have the same law.  \QED
\end{proof}
\begin{remark}  Theorem 2.11 of \cite{MU-2012}  states that if $X\in \mathcal{DF}(E)$ with $s(r)<\infty$ and  $f:E\rightarrow \mathbb{R}$ is a
non-negative Borel function, then it holds that $\int_0^{\zeta}f(X_s)ds<\infty$ a.s., on the event   $\{\lim_{t\rightarrow \zeta} X_t=r\}$, if and only if $$\int_c^r (s(r)-s(x)) f(x)m(dx)<\infty, \quad l<c<r.$$
Keeping the setting of Corollary  \ref{equality-in-distribution-2} and applying the above to  $X^*$,
with
$$ m^*(x)=(s(x)-s(l))^2m(dx)\quad \hbox{and} \quad f(x)=I'^2(x)\frac{\sigma^2(x)}{\sigma^2\circ I(x)}, $$
we obtain the necessary and sufficient condition
$$\int_c^r (s^*(r)-s^*(x)) f(x)m^*(dx)=\frac{1}{s(x_0)-s(l)}\int_l^{I(c)} (s(x)-s(l))m(dx)<\infty, \quad l<c<r,$$
for the finiteness of both $A^*_{\zeta^*}$ and $\zeta$. This is in agreement with the aforementioned corollary.
\end{remark}

\section{Applications}
\subsection{Inversions of Brownian motions killed upon exiting intervals.} Assume that $X$ is a Brownian motion killed upon exiting the interval $E=(l,r)$.  Let $X_0=x_0\in E$.  If $E$ is bounded then we obtain the inversions
\begin{eqnarray}
 I(x)=
 \left\{
\begin{array}{lll} \nonumber

  x_0^2/x, & \mbox{if}\ x_0^2= lr;& \\
  2x_0-x, & \mbox{if}\ 2x_0=l+r;\\
  (x+a)/(b x-1), &\hbox{otherwise},  & \\
\end{array}
\right.
\end{eqnarray}
where
$$
a=\frac{2lr-x_0(l+r)}{x_0^2-lr}x_0 \ \ \hbox{and}\ \
b=\frac{2x_0-(l+r)}{x_0^2-lr}.
$$
 We distinguish three cases appearing in the form of the inversion $I$. In the first case, equation $x_0^2= lr$ implies that $l$ and $r$ are of the same sign. If $l>0$ then
 $X^*$ is the  $3$-dimensional Bessel process killed upon exiting $E$. If $l<0$ then
 $-X^*$ is the $3$-dimensional Bessel process.

 In the second case, $X^*$ is a Brownian motion killed when it exits $E$. In the third case, $X^*$ satisfies the s.d.e. $X_t^*=B_t+x_0+\int_0^t (X^*_s-1/b)^{-1} ds$ for $t<\zeta^*$.
 If $x_0$ is below the arithmetic mean i.e. $x_0<(l+r)/2$ then
 $x_0-1/b=(x_0-r)(x_0-l)/(2x_0-(l+r))$. By uniqueness of the solution to the s.d.e. $R_t=R_0+B_t+\int_0^t (1/R_s) ds$ driving the three-dimensional Bessel process $R$, we get that
 $X^*_t=1/b+R_t$ with $R_0=x_0-1/b$ killed when $R$ exits the interval $(l-1/b, r-1/b)$. If $x_0$ is above the arithmetic mean then we find $X_t^*=1/b-R_{t}$, where $R$ is a three-dimensional Bessel process started at $1/b-x_0$, where $R$ is killed as soon as it exits the interval $( 1/b-r, 1/b-l)$.

If $l$ is finite and $r=\infty$ or $l=-\infty$ and $r$ is finite then by Proposition \ref{kinds-involutions} we respectively obtain
$$I(x)=l+\frac{(x_0-l)^2}{x-l}\ \hbox{and}\ I(x)=r-\frac{(r-x_0)^2}{r-x}.$$
 If $r=+\infty$ then a similar reasoning as above gives that $X^*_t=l+R_t$ where $R$ is a three-dimensional Bessel process started at $x_0-l$.  If $l=-\infty$ we obtain $X^*_t=r-R_t$ where $R$ is a three-dimensional Bessel process started at $r-x_0$.
If $2x_0=l+r$ or $l=-\infty$ and $r=+\infty$ then we obtain the Euclidian reflection in $x_0$, i.e. $x\rightarrow 2x_0-x$, and $X^*$ is a Brownian motion killed when it exits $E$. Note that for $E=(0,\infty)$ the conclusion from our Theorem
\ref{equality-in-distribution} is found in Lemma 3.12 on  p. 257 of
\cite{ry99}. That is $(X_t, t\leq H_0)$ is distributed as
$(1/X^*_{\tau_t^{*}},\, t\leq A_{\infty}^{*})$, where $\tau_t^{*}$ is the
inverse of $A_t^{*}=\int_0^t (X_{s}^{*})^{-4}ds$. In this case $X^*$ is the $3$-dimensional Bessel process; see our last example given in Subsection 5.3 for Bessel processes of other dimensions.  If $E=\mathbb{R}$ then $I$ is the Euclidian reflection in $x_0$. Finally, observe that the set of inversions of $E$ we obtain for the Brownian motion killed when it exits $E$ is precisely
$\mathcal{MI}(E):=\{ I\in \mathcal{MI}\,;\, I(E)=E \}$.

 \subsection{Inversions of drifted Brownian motion and hyperbolic Bessel process of dimension 3.}\label{HypBes}
  Set  $B_t^{(\mu)}=B_t+\mu t$, $t\geq 0$, where $B$ is a
 standard Brownian motion and $\mu \in \mathbb{R}$, $\mu\neq 0$. Thus, $B^{(\mu)}$ is a transient diffusion which drifts to $+\infty$ (resp. to $-\infty$) if $\mu>0$ (resp. $\mu<0$).
 Let us take the reference scale function $s(x)=-e^{-2\mu x}/(2\mu)$. Observe that $s$ is increasing for all $\mu\neq 0$. Moreover
 $\lim_{x\to \infty} s(x) = 0$ if $\mu>0 $ and $\lim_{x\to -\infty} s(x) = 0$ if $\mu<0 $. We take $X$ to be $B^{(\mu)}$ killed when it exits $(l,r)\subseteq \mathbb{R}$. Let us fix $x_0\in E$.

If we take $E=\mathbb{R}$ then by Proposition \ref{kinds-involutions}, even though $X$ is of type 2 if $\mu<0$ and of type 3 if $\mu>0$, the inversion of $E$ in the direction of $s$ is the Euclidian reflection in $x_0$. $X^*$ is the Brownian motion with drift $\mu^*=-\mu$ in this case. If  $s(l)$ and $s(r)$ are finite then using Proposition \ref{Prop-Involutions-0} we obtain appropriate, but in most cases complicated, formulas for $I$.
For Brownian motion with drift the case of the half-line is the most interesting. Take for instance $E=(0,\infty)$ and process $X_t=x_0+B_t+\mu t$ starting from some point $x_0>0$ and killed at zero. We consider two cases:
if $\mu<0$ then $s(0)=-1/(2\mu) > 0$, $s(\infty)=\infty$ and $X$ is of type 2;
if $\mu>0$ then $s(0)=-1/(2\mu) < 0$, $s(\infty)=0$ and $X$ is of type 1.

First, let $\mu<0$. If $X$ is of type 2 then, by Proposition \ref{kinds-involutions}, we have only one possible inversion: $I(x)=s^{-1}\left(s(l)+(s(x_0)-s(l))^2/(s(x)-s(l))\right)$,
which gives
$$
I(x)=
(2|\mu|)^{-1}\ln\left((e^{-2\mu x}-1+(1-e^{-2\mu x_0})^2)/(e^{-2\mu x}-1)\right).
$$
If we choose $x_0=(2|\mu|)^{-1} \ln (1+\sqrt{2})$ then the above formula simplifies to
$$
I(x)=(2|\mu|)^{-1}\ln\left((e^{-2\mu x}+1)/(e^{-2\mu x}-1)\right)=(2|\mu|)^{-1}\ln \coth (|\mu| x).
$$
Now, if $\mu>0$ then $X$ is of type 1. Because $e^{-2\mu x}\neq 0$ implies $s^2(x_0)\neq s(0)s(\infty)=0$ only two cases are possible. Either $2s(x_0)=s(0)+s(\infty)= s(0)$, which gives $x_0=\frac1{2\mu}\ln 2$, and then we have $s$-reflection $I(x)=-\frac{1}{2\mu}\ln(1-e^{-2\mu x})$
or $2s(x_0)\neq s(0)+s(\infty)$ and then the formula from Proposition 3 gives the inversion
$$
I(x)=\frac{1}{2\mu}\ln\left(\frac{1+e^{-2\mu x}(e^{4\mu x_0}-2e^{2\mu x_0})}{1-e^{-2\mu x}}\right).
$$
This in turn simplifies if we choose $e^{4\mu x_0}-2e^{2\mu x_0}=1$, that is, if $x_0=\frac{1}{2\mu}\ln (1+\sqrt{2}).$
Then
$$
I(x)=\frac{1}{2\mu}\ln \left(\frac{1+e^{-2\mu x}}{1-e^{-2\mu x}}\right)=\frac{1}{2\mu} \ln \coth (\mu x).
$$

It is easy to check that if $\mu<0$, then $h(x)=\frac{e^{-2\mu x}-1}{\sqrt{2}}$ and then
$X^*$, being an $h$-process, has generator $L^*f(x)=\frac12 f'' (x)+|\mu| \coth(|\mu| x) f'(x)$.
In particular, if $\mu=-1$ then $I(x)= \frac12 \ln\coth x$ and $X^*$ has generator
$L^*f(x) =\frac12 f ''(x)  +\coth( x) f ' (x)$.
This recovers the well-known fact that $B_t-t$ conditioned to avoid zero
is a three-dimensional hyperbolic Bessel process. The novelty here is that we get $X^*$ as a time changed inversion of $B_t-t$.
If $\mu>0$, then $h(x)=\frac{e^{-2\mu x}+1}{\sqrt{2}}$ and $X^*$ has the generator
$L^*f(x)=\frac12 f'' (x)+\mu \tanh(\mu x) f'(x)$.
 Note that we have shown the following particular result.
\begin{corollary}\label{hypBessel}
 Let  $X$ be a three-dimensional hyperbolic Bessel process on $(0,\infty)$ and  $I(x)=\frac12 \ln\coth x$. Then, $I(X)$ is a time-changed
 drifted Brownian motion $B_t-t$ conditioned to avoid 0. In particular, the functional
 $$A_{\infty}=\frac14\int_0^{\infty} \frac{ds}{ \left(\cosh (X_s) \sinh(X_s) \right)^2}  $$ has the same distribution as the first hitting time of $0$ by the Brownian motion with minus unity drift.  In other words, we have
  $$\mathbb{P}\left(A_{\infty}\in dt\right)= e^{x_0-\frac{t}{2}}\frac{x_0}{\sqrt{2\pi t^3}}e^{-\frac{x_0^2}{2t}}dt, \quad t>0,$$
  where $x_0=\frac{1}{2}\ln (1+\sqrt{2})$.

\end{corollary}

\subsection{Inversions of Bessel processes.}

It is known that two Bessel processes of dimensions $\delta$ and $4-\delta$, respectively, are dual one to another, see e.g. (\cite{Salm-Boro}, \cite{GJ-Y}, \cite{ry99}).
Our focus here is on the construction of the latter dual. To say more, let $X$ be a Bessel process of dimension $\delta\geq 2$. Thus, $E=(0,\,\infty)$, $0$ is polar and  $X$ has the infinitesimal generator $Lf(x)=\frac{1}{2} f''(x) +(\delta-1)/(2x)f'(x)$ for $x\in E$, see (\cite{ry99}, \cite{GJ-Y}). Let  $\nu=(\delta/2)-1$ be the index of $X$. The scale function of $X$ may then be chosen to be
\begin{eqnarray} \nonumber
 s(x)=
 \left\{
\begin{array}{lll}
    -x^{-2\nu}  &\hbox{if}\ \nu>0; \\
  2\log x  &\hbox{if}\ \nu =0;\\
  x^{-2\nu}  &\hbox{otherwise}.
\end{array}
\right.
\end{eqnarray}
$X$ is recurrent if and only if $\delta=2$, see for example Proposition 5.22 on p. 355 of \cite{Karatzas-Shreve}. First, when $\delta=2$ the inversion of $E$ in the direction of $s$ is the s-reflection $x\rightarrow x_0^2/x$. For the other cases, the inversion of $E$ in the direction of $s$ with fixed point 1 is found to be $x\rightarrow 1/x$. Furthermore, $X^*$ is a Bessel process of dimension $4-\delta<2$. Observe that in the two considered cases the involved clock is $A_t=\int_0^t (x_0/X_s)^4\ ds$. Hence, we have shown the following result which is a particular case of Proposition 1.11. on p. 447 of \cite{ry99}.
\begin{corollary}\label{Bessel}
 Let  $X$ be a Bessel process of dimension $\delta\in\mathbb{R}$, killed at $0$ if $\delta<2$, starting from $X_0>0$  and $I(x)=1/x$. Then $I(X)$ is a time-changed
  Bessel process of dimension $4-\delta$. In particular, the functional $\int_0^{\infty}ds/X_s^4$, when $X$ is a Bessel process of dimension $\delta>2$, has the same distribution as the first hitting time of zero by the Bessel process of dimension $4-\delta$.
\end{corollary}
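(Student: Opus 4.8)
\emph{Proof proposal.} The plan is to specialise Theorem \ref{equality-in-distribution} (and, for the distributional identity, Corollary \ref{equality-in-distribution-2}) to the Bessel case, so that essentially all that remains is the explicit identification of the three data $s$, $I$ and $h$ attached to $X$. Here $\sigma\equiv 1$, $b(x)=(\delta-1)/(2x)$, the state space is $E=(0,\infty)$, and with index $\nu=\delta/2-1$ one takes the scale function displayed in the statement. Reading off $s(0^{+})$ and $s(\infty)$ shows that $X$ is of type $4$ if $\delta=2$, of type $3$ (because $s(0^{+})=-\infty$, $s(\infty)=0$) if $\delta>2$, and of type $2$ (because $s(0^{+})=0$, $s(\infty)=+\infty$) if $\delta<2$; in the last case $0$ is an exit boundary, which is why we kill $X$ there, whereas $0$ is polar when $\delta\ge 2$.

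First I would fix $x_{0}=1$ and compute the $s$-inversion with that fixed point from Proposition \ref{kinds-involutions}. In each regime the relevant formula collapses, after substituting $s(y)=\pm y^{-2\nu}$ (or $2\log y$) and $s^{-1}$: the $s$-reflection in $1$ for $\delta=2$, the type-$3$ formula $s^{-1}\!\left(s(\infty)-(s(\infty)-s(1))^{2}/(s(\infty)-s(x))\right)$ for $\delta>2$, and the type-$2$ formula $s^{-1}\!\left(s(0)+(s(1)-s(0))^{2}/(s(x)-s(0))\right)$ for $\delta<2$ all reduce to $I(x)=1/x$; since $I(E)=E$ and $I(1)=1$, this is the inversion named in the statement. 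Next, (\ref{harmonic-h-general}) gives the harmonic function $h(x)=x^{-2\nu}=x^{2-\delta}$ in every case (normalised so that $h(1)=1$; for $\delta=2$ this is $h\equiv 1$), and a short computation of $L^{*}f=L(hf)/h$ with this $h$ yields
\begin{equation*}
L^{*}f(x)=\tfrac12 f''(x)+\frac{3-\delta}{2x}\,f'(x),
\end{equation*}
which is exactly the generator of a Bessel process of dimension $4-\delta$; hence $X^{*}$ is such a process, killed at $0$ when $4-\delta<2$, i.e. when $\delta>2$. Since $\sigma\equiv 1$ and $I'(x)=-1/x^{2}$, the clock (\ref{defA}) is $A_{t}=\int_{0}^{t}ds/X_{s}^{4}$, with $\tau$ its inverse.

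With these identifications in hand, assertion (2) of Theorem \ref{equality-in-distribution}, applied with $X^{*}_{0}=I(X_{0})=1/X_{0}$, says precisely that $(I(X_{\tau_{t}}),\ t\le A_{\zeta})$ is a Bessel process of dimension $4-\delta$, which is the first claim. For the ``in particular'' part take $\delta>2$, so that $X$ is of type $3$ and $\zeta=\infty$; Corollary \ref{equality-in-distribution-2} then gives that $A_{\infty}=\int_{0}^{\infty}ds/X_{s}^{4}$ has the same law as $\zeta^{*}$, and since $X^{*}$ is a Bessel process of dimension $4-\delta<2$ started from $1/X_{0}$, its lifetime $\zeta^{*}$ is its first hitting time of $0$. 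This is the asserted identity in distribution.

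None of these computations is difficult, so there is no real ``hard part''; the points that require care are bookkeeping ones — checking that the three type-dependent formulae for $I$ genuinely all collapse to $x\mapsto 1/x$, that $h$ is normalised so that the identity $I=h^{-1}(1/h)$ of Proposition \ref{Proposition4} holds, and that the boundary conventions (polarity of $0$ when $\delta\ge 2$ versus killing at $0$ when $\delta<2$, together with the mirror behaviour of $X^{*}$) are consistent on both sides of the duality, so that Theorem \ref{equality-in-distribution} and Corollary \ref{equality-in-distribution-2} apply verbatim.
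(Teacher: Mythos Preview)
Your proposal is correct and follows exactly the approach of the paper: specialise Theorem \ref{equality-in-distribution} and Corollary \ref{equality-in-distribution-2} after identifying $s$, $I$, $h$, $L^{*}$ and the clock $A_{t}=\int_{0}^{t}X_{s}^{-4}\,ds$ for the Bessel generator. If anything, you are more explicit than the paper, which in the lead-up to the corollary only writes out the case $\delta\ge 2$ and leaves $\delta<2$ to the evident duality, whereas you verify all three type cases directly.
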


\noindent{\bf Acknowledgment:}   We thank the referee for numerous comments that helped to improve the paper.   We would like to thank Julien Berestycki who asked the first author a  question which led to Corollary \ref{time-reversal}.
  We are greatly indebted to l'Agence Nationale de la Recherche  for  the research grant ANR-09-Blan-0084-01.

\end{document}